\documentclass[12pt]{article}

\usepackage[utf8]{inputenc}
\usepackage[margin=1in]{geometry}
\usepackage{amsmath}
\usepackage{amssymb}
\usepackage{amsthm}
\usepackage{mathtools}
\usepackage[english]{babel}
\usepackage{fancyhdr}
\usepackage{titling}
\usepackage[colorlinks=true, allcolors=blue]{hyperref}

\newtheorem{theorem}{Theorem}[section]
\newtheorem{proposition}[theorem]{Proposition}
\newtheorem{lemma}[theorem]{Lemma}
\theoremstyle{definition}
\newtheorem{definition}[theorem]{Definition}

\title{Minimal Nilpotent Orbits of type $G_2$, $F_4$ and $E_8$}
\author{Boming Jia}
\date{}
\hypersetup{
    pdftitle={Minimal Nilpotent Orbits of type G2, F4 and E8},
    pdfauthor={Boming Jia},
    pdfsubject={Minimal Nilpotent Orbits of type G2, F4 and E8}
}

\DeclareMathOperator{\Spec}{Spec}
\DeclareMathOperator{\Sym}{Sym}
\DeclareMathOperator{\Ad}{Ad}
\DeclareMathOperator{\ad}{ad}
\DeclareMathOperator{\rk}{rank}
\DeclareMathOperator{\Aut}{Aut}
\DeclareMathOperator{\Stab}{Stab}
\DeclareMathOperator{\gr}{gr}
\newcommand{\C}{\mathbb C}
\newcommand{\Gm}{\mathbb G_m}
\newcommand{\SL}{\mathrm{SL}}
\newcommand{\res}{\mathrm{res}}
\newcommand{\GL}{\mathrm{GL}}
\newcommand{\PGL}{\mathrm{PGL}}
\newcommand{\PP}{\mathbb P}
\newcommand{\so}{\mathfrak{so}}
\newcommand{\Lieg}{\mathfrak{g}}
\newcommand{\h}{\mathfrak h}
\newcommand{\OO}{\mathcal O}
\newcommand{\Omin}{\mathcal O_{\min}}
\newcommand{\Ominbar}{\overline{\mathcal O}_{\min}}
\newcommand{\aff}{\mathrm{aff}}

\begin{document}
\setlength{\droptitle}{-6em}
\maketitle
\vspace{-4em}

\begin{abstract}
Let $\Lieg$ be a complex simple Lie algebra of type $G_2$, $F_4$, or $E_8$. We prove that its minimal nilpotent orbit closure is not isomorphic to $(T^*X)^\aff$ for any smooth quasi-affine variety $X$. We do not assume equivariance or Poisson compatibility. We also prove the same statement in types $A_1$ and $A_2$.
\end{abstract}

\section{Introduction.}

Let $\Lieg$ be a complex simple Lie algebra, and let $G$ be its adjoint group. Let $x\in\Lieg$. Its adjoint orbit is
\[
    G\cdot x=\{\Ad(g)x:g\in G\}.
\]
The linear map $\ad x:\Lieg\to\Lieg$ is given by $(\ad x)(y)=[x,y]$. We call $x$ {nilpotent} if $\ad x$ is nilpotent. In this case, $G\cdot x$ is called a {nilpotent orbit}. There is a unique nonzero nilpotent orbit of smallest dimension. It is the \emph{minimal nilpotent orbit}, denoted by $\Omin(\Lieg)$, and its Zariksi closure
    $$\Ominbar(\Lieg)=\Omin(\Lieg)\sqcup\{0\}.$$

Choose a Cartan subalgebra $\h\subset\Lieg$ and a Borel subalgebra $\mathfrak b\supset\h$. The Cartan subalgebra determines a root system $\Phi\subset\h^*$, and the Borel subalgebra determines a set of positive roots $\Phi^+$. Let $\theta$ be the highest root, and choose a nonzero vector $e_\theta$ in its root space $\Lieg_\theta$. Then
\[
    \Omin(\Lieg)=G\cdot e_\theta.
\]

For $\Lieg=\mathfrak{sl}_2$, we have
\[
    \Ominbar(\mathfrak{sl}_2)
    =\left\{
      \begin{pmatrix}a&b\\ c&-a\end{pmatrix}:a,b,c\in\C,\ a^2+bc=0
     \right\}.
\]
This is the usual nilpotent cone in $\mathfrak{sl}_2$. For $\Lieg=\mathfrak{sl}_3$, we have
\[
    \Ominbar(\mathfrak{sl}_3)
    =\{A\in\mathfrak{sl}_3:\rk A\le1\}.
\]

The {affinization} of a variety $Y$ is
\[
    Y^\aff\coloneqq\Spec\Gamma(Y,\OO_Y),
\]
where $\Gamma(Y,\OO_Y)$ is the ring of regular functions on $Y$. If this ring is finitely generated, then $Y^\aff$ is an affine variety.
A variety is {quasi-affine} if it is an open subvariety of an affine variety. If $X$ is smooth and quasi-affine, then the total space $T^*X$ of its cotangent bundle is also smooth and quasi-affine.

\begin{samepage}
Our question is the following.
\begin{center}
{\setlength{\fboxsep}{8pt}
\fbox{\begin{minipage}{0.86\textwidth}
\centering
For which complex simple Lie algebras $\Lieg$ can one write $\Ominbar(\Lieg)\cong (T^*X)^\aff$ for some smooth quasi-affine variety $X$?
\end{minipage}}}
\end{center}
\end{samepage}

The answer is negative in types $A_1$ and $A_2$, i.e. neither $\Ominbar(\mathfrak{sl}_2)$ nor $\Ominbar(\mathfrak{sl}_3)$ is isomorphic to $(T^*X)^\aff$ for any smooth quasi-affine variety $X$. We prove these cases in Theorem~\ref{smalltypes}.

Positive examples also exist. Let $U$ be a maximal unipotent subgroup of $\SL_3$. In \cite{JiaSLU}, we constructed an isomorphism
\[
    \bigl(T^*(\SL_3/U)\bigr)^\aff
    \cong\Ominbar(\so_8).
\]
This gives a positive example in type $D_4$.

In \cite[Theorem~1.4]{FuLiu}, Fu and Liu proved a long list of positive answers in types $A_n$ for $n\ge3$, $B_n$ for $n\ge3$, $C_n$ for $n\ge2$, $D_n$ for $n\ge4$, $E_6$, and $E_7$.

In the remaining types $G_2$, $F_4$, $E_8$, our main theorem gives a negative answer.

\begin{theorem}\label{main}
    For $\Lieg$ of type $G_2$, $F_4$, or $E_8$, the closure $\Ominbar(\Lieg)$ of the minimal nilpotent orbit is not isomorphic to $(T^*X)^\aff$ for any smooth quasi-affine variety $X$.
\end{theorem}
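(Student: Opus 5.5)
The plan is to turn the abstract isomorphism $\Ominbar(\Lieg)\cong (T^*X)^\aff$ into a $\Gm$-action on $\Ominbar(\Lieg)$ with very rigid weights, and then show that no $\Gm$ inside $\Aut(\Ominbar(\Lieg))$ has those weights when $\Lieg$ is of type $G_2$, $F_4$ or $E_8$.

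\textbf{Step 1: the fibre-scaling grading.} Write $A=\Gamma(T^*X,\OO)=\bigoplus_{k\ge0}\Gamma(X,\Sym^kT_X)$. Here the degree-$k$ piece consists of the functions that are homogeneous of degree $k$ along the fibres, and $A_0=\Gamma(X,\OO_X)$. Scaling the cotangent fibres defines a $\Gm$-action on $T^*X$. Because $A$ is finitely generated, this action descends to an algebraic $\Gm$-action on $\Spec A\cong\Ominbar(\Lieg)$, and all its weights are nonnegative. Two further facts come from the natural map $\pi:T^*X\to (T^*X)^\aff$. Since $X$ is quasi-affine, $\pi$ is an open immersion on an open set meeting the zero section. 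It follows that the differential of the action at a general point of the image of the zero section has weight $0$ with multiplicity $\dim X=\tfrac12\dim\Omin$ and weight $1$ with multiplicity $\tfrac12\dim\Omin$. It also follows that the $\Gm$-fixed locus $F\subset\Ominbar(\Lieg)$ contains the closure of $\pi(X)$, which has dimension $\tfrac12\dim\Omin=h^\vee-1$.

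\textbf{Step 2: identifying the $\Gm$.} Next I would determine $\Aut(\Ominbar(\Lieg))$. Its connected component is $G\times\C^*$: the algebra $\C[\Ominbar]$ is generated by $\Lieg^*$, and any automorphism preserving the unique singular point preserves the maximal ideal $\mathfrak m$. Using the standard Hodge/conical-structure argument, it therefore acts on $\mathfrak m/\mathfrak m^2\cong\Lieg^*$ through the normalizer of the cone, which is $G\cdot\C^*$; for $G_2,F_4,E_8$ there are no outer automorphisms. Hence, after conjugation, our $\Gm$ is $t\mapsto(\lambda(t),t^m)$ for a cocharacter $\lambda$ of a maximal torus and an integer $m$. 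On the root space $\Lieg_\alpha$, viewed as a piece of the linear coordinates, it then acts with weight $m+\langle\alpha,\lambda\rangle$. So the action is the one attached to the $\mathbb Z$-grading $\Lieg=\bigoplus_j\Lieg_j$ defined by $\lambda$, shifted by $m$.

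\textbf{Step 3: the numerical obstruction.} Consider the fixed locus $F=\Ominbar\cap\Lieg_{-m}$, where $\Lieg_{-m}$ is the graded piece picked out by $\lambda$. By Step 1 it must have dimension at least $h^\vee-1$. Moreover, at a general point $e\in F$, the tangent space $T_e\Omin=[\Lieg,e]$ must split into weights $0$ and $1$ only, each with multiplicity $h^\vee-1$. That is, $[\Lieg_{j},e]$ is nonzero only for $j\in\{0,1\}$, with the appropriate dimensions. This forces $e$ to lie in an extremal piece of a short grading, i.e.\ a grading $\Lieg=\Lieg_{-1}\oplus\Lieg_0\oplus\Lieg_1$ or one of depth $2$ with $\dim\Lieg_{2}$ controlled. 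Such a grading is given by a coweight $\lambda$ pairing with $\theta$ as $1$ or $2$. Step 1 also requires $\Ominbar\cap\Lieg_{j}$ to be $(h^\vee-1)$-dimensional and Lagrangian-like. The final case analysis then runs over the coweights with $\langle\theta,\lambda\rangle\le2$ for $G_2,F_4,E_8$, where $\theta$ has no coefficient-$1$ simple root. Gradings by fundamental coweights with $\langle\theta,\varpi_i^\vee\rangle=2$ come from the simple roots of coefficient $2$ in $\theta$. For each such $\varpi_i^\vee$ I would check that $\dim(\Ominbar\cap\Lieg_1)$ or $\dim(\Ominbar\cap\Lieg_2)$ differs from $h^\vee-1$ ($=3,8,29$ respectively), or that the weight multiplicities on $[\Lieg,e]$ fail. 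In types $A_n,D_n,E_6,E_7$ a coefficient-$1$ node exists, and this is exactly what produces Fu--Liu's examples; this serves as a consistency check.

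The main obstacle is Step 2: justifying rigorously that an arbitrary algebraic $\Gm$-action on $\Ominbar(\Lieg)$, with no equivariance or Poisson assumption, is conjugate into $G\times\C^*$. My first attempt would use that the action fixes $0$, hence acts linearly on the Zariski cotangent space $\mathfrak m/\mathfrak m^2\cong\Lieg^*$. I would then linearize it, using positivity of the weights (they are nonnegative, and $A_0$ would have to be $\C$ since the only $\Gm$-invariant functions on the cone are constants, otherwise $F$ would be too large). A linear automorphism of $\Lieg$ preserving the cone $\Ominbar$ normalizes $G$, so it lies in $G\cdot\C^*$. The subtle point is that the grading need not be generated in degree $1$, so the action on $\Ominbar$ might a priori differ from its linear part. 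Here I would use that $\Ominbar$ is quasi-homogeneous with generators in a single degree to force the action to coincide with its linearization.
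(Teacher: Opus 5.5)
Your Step~2 is a genuine gap, and the route you sketch for it does not work. The claim $\Aut(\Ominbar(\Lieg))^\circ=G\times\C^*$ is false, because the automorphism group of the cone is infinite-dimensional. For $\Ominbar(\mathfrak{sl}_2)\cong\C^2/\{\pm1\}$, the shears $(x,y)\mapsto(x,y+sx^3)$ commute with $-1$ and descend to non-linear automorphisms: with $u=x^2$, $v=xy$, $w=y^2$ they send $v\mapsto v+su^2$. In every type, the replicas $\exp(s\,\ell\,\partial)$ of the $\mathbb G_a$-action $\exp(s\ad e_\alpha)$ give the same phenomenon; here $\ell\ne0$ is a $\partial$-invariant linear form. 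At best, your argument shows that the \emph{linear part} of an automorphism fixing the vertex lies in $G\cdot\C^*$, i.e.\ its action on $\mathfrak m/\mathfrak m^2\cong\Lieg^*$. It says nothing about the automorphism itself.

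Your linearization attempt also contradicts your own Step~1. It needs $A_0=\C$, but $A_0=\Gamma(X,\OO_X)$ is the coordinate ring of the fixed-point scheme $X^\aff$, which has dimension $\dim X=h^\vee-1>0$. Without $A_0=\C$, homogeneous lifts of a basis of $\mathfrak m/\mathfrak m^2$ need not generate $A$. Moreover, ``generators in a single degree'' refers to the standard conical grading of $\C[\Ominbar]$, which has no relation to the fibre grading. Linearizing a $\Gm$-action globally is a notoriously hard problem, already on affine spaces. So the identification $F=\Ominbar\cap\Lieg_{-m}$ used in Step~3 is unsupported. Step~3 is in any case only a plan: the reduction to short gradings and the final case check are not carried out.

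The missing idea is that you never need the action to be linear; everything can be done infinitesimally at the vertex. The action fixes $0$ and preserves every $\mathfrak m^q$. Since $\Ominbar$ is a cone spanning $\Lieg$, its tangent cone at $0$ is $\Ominbar$ itself. Hence the tangent representation $\rho:\Gm\to\GL(\Lieg)$ preserves $\Ominbar$. The linear statement then applies to $\rho$: restriction to $\Aut(G/P_\theta)$ is injective, and Demazure's theorem identifies the identity component. After conjugation this gives $\rho(t)=t^c\Ad(\mu(t)^{-1})$ with $\mu$ dominant, and nonnegativity together with nontriviality of $\rho$ forces $c>0$.

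Since the fibre grading has only nonnegative weights, the fixed-point scheme is $\Spec A_0=X^\aff$. Its Zariski tangent space at the vertex is the weight-zero part $\Lieg^{\rho(\Gm)}$, so $\dim X=\dim X^\aff\le\dim\Lieg^{\rho(\Gm)}$. This is the infinitesimal substitute for your bound $\dim F\ge h^\vee-1$.

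Finally, a fixed root space $\Lieg_\beta$ has $\langle\beta,\mu\rangle=c>0$, so $\beta$ is positive. The nonnegative weight on $\Lieg_\theta$ and dominance of $\mu$ give $c\ge\langle\theta,\mu\rangle\ge\langle\beta,\mu\rangle$. Hence $\langle\theta-\beta,\mu\rangle=0$, and $\beta$ has the same $\alpha_i$-coefficient as $\theta$ whenever $\langle\alpha_i,\mu\rangle>0$. Counting such roots gives $\dim\Lieg^{\rho(\Gm)}\le2,7,14$, which contradicts $\dim X=3,8,29$. No weight analysis at general points of $F$ and no case analysis over short gradings is needed.
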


\vspace{1em}
\textbf{Acknowledgments.}
The author was supported by NSFC Grant No.~12225108 and the Shuimu Scholar Program in Tsinghua University. The author used ChatGPT extensively during the preparation of this work, and he would like to thank the model GPT-5.6 Sol for pointing out the non-existence result of type $E8$.

\vspace{1em}
\section{The projective orbit and its stabilizer.}\label{projectiveorbit}

Throughout this section and Section~\ref{fixedbound}, let $\Lieg$ be of type $A_2$, $G_2$, $F_4$, or $E_8$.

By the Jacobson-Morozov theorem, we could choose $h_\theta\in\h$ and $f_\theta\in\Lieg_{-\theta}$ such that $(e_\theta,h_\theta,f_\theta)$ forms an $\mathfrak{sl}_2$-triple.  For $-2\le j\le2$, put
$
    \Lieg_j=\{x\in\Lieg:[h_\theta,x]=jx\}.
$
Then
\[
    \Lieg=\Lieg_{-2}\oplus\Lieg_{-1}\oplus\Lieg_0
    \oplus\Lieg_1\oplus\Lieg_2.
\]
Let $P_\theta\subset G$ be the stabilizer of $[e_\theta]\in\mathbb{P}(\Lieg)$. Its Lie algebra is
\[
    \mathfrak p_\theta:=\operatorname{Lie}(P_\theta)
    =\{x\in\Lieg:[x,e_\theta]\in\C e_\theta\}.
\]
The spaces $\Lieg_{\pm2}$ are one-dimensional, and the $\mathfrak{sl}_2$-relations give an isomorphism \[\ad e_\theta:\Lieg_{-1}\to\Lieg_1.\] Hence
\[
    \mathfrak p_\theta=\Lieg_0\oplus\Lieg_1\oplus\Lieg_2,
    \qquad
    \dim G/P_\theta=1+\dim\Lieg_{-1}.
\]
Let $\gamma_\theta:\Gm\to G$ be the one-parameter subgroup such that $d\gamma_\theta(1)=h_\theta$.
Then $$\Ad(\gamma_\theta(t))e_\theta=t^2e_\theta,$$ and the map $G\cdot e_\theta\to G\cdot[e_\theta]$ has one-dimensional fibers. Thus
\[
    \dim\Omin(\Lieg)=2+\dim\Lieg_{-1}.
\]

Let $B\subset G$ be the Borel subgroup determined by the positive roots. Using the Bourbaki labeling \cite[Planches I and VII--IX]{Bourbaki}, let $P_i$ be the maximal parabolic corresponding to $\alpha_i$. Then we have
\[
    \begin{array}{c|c|c}
    \text{type of }\Lieg&P_\theta&\dim\Omin(\Lieg)\\ \hline
    A_2&B&4\\
    G_2&P_2&6\\
    F_4&P_1&16\\
    E_8&P_8&58
    \end{array}.
\]
The adjoint representation is irreducible, so this cone spans $\Lieg$.

\begin{definition}
  We define the \emph{projective stabilizer} of $G/P_\theta\subset\PP(\Lieg)$ as 
    \[
        \Stab_{\PGL(\Lieg)}(G/P_\theta)
        =\{a\in\PGL(\Lieg):a(G/P_\theta)=G/P_\theta\}.
    \]
    Restriction gives a homomorphism from the projective stabilizer to the group of algebraic automorphisms $\Aut(G/P_\theta)$.
    For either algebraic group, we use the superscript ${}^\circ$ to denote its identity component.
\end{definition}

\begin{proposition}\label{projectivestabilizer}
    Regard $G$ as a subgroup of $\PGL(\Lieg)$ via the adjoint representation. Then $$\Stab_{\PGL(\Lieg)}(G/P_\theta)^\circ=G.$$
\end{proposition}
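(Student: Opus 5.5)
We will show that $H:=\Stab_{\PGL(\Lieg)}(G/P_\theta)^\circ$ equals $G$ by comparing Lie algebras. Since $G$ is connected and preserves $G/P_\theta$, we have $G\subset H$. Both groups are connected, so it suffices to prove $\dim H\le\dim G$, or equivalently $\operatorname{Lie}(H)=\Lieg$.

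The first step is to apply the restriction homomorphism $\rho:H\to\Aut(G/P_\theta)^\circ$ and show it is injective. Suppose $a\in H$ acts trivially on $G/P_\theta$. Then a lift $\tilde a\in\GL(\Lieg)$ preserves every line of the cone $\Ominbar(\Lieg)$. The cone $\Ominbar(\Lieg)$ spans $\Lieg$ because the adjoint representation is irreducible, so it contains a basis of $\Lieg$ together with vectors in general position with respect to that basis. A linear map fixing all of these lines is a scalar, hence $a=1$ in $\PGL(\Lieg)$. It follows that $\dim H\le\dim\Aut(G/P_\theta)^\circ$.

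The second step is to identify $\Aut(G/P_\theta)^\circ$. We invoke the classical theorem of Onishchik and Demazure on automorphism groups of generalized flag varieties $G/P$. For adjoint $G$, the identity component of $\Aut(G/P)$ equals $G$ except in three exceptional cases:
\begin{itemize}
\item $\mathrm{Sp}_{2n}/P_1=\PP^{2n-1}$;
\item $\mathrm{SO}_{2n+1}/P_n\cong\mathrm{SO}_{2n+2}/P_{n+1}$;
\item $G_2/P_1\cong Q^5$, the five-dimensional quadric.
\end{itemize}
We must check that our cases avoid these exceptions.

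\begin{itemize}
\item For $A_2$, $P_\theta=B$, so $G/P_\theta$ is the full flag variety of $\SL_3$, and $\Aut^\circ=\PGL_3$.
\item For $G_2$, $P_\theta=P_2$, and $G_2/P_2$ is the adjoint variety, not the quadric $G_2/P_1$. Its automorphism group is $G_2$.
\item For $F_4/P_1$ and $E_8/P_8$, no exceptions occur.
\end{itemize}

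In each case $\dim\Aut(G/P_\theta)^\circ=\dim G$. Therefore $\dim H\le\dim G$ and $H=G$.

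The main obstacle is this appeal to Demazure's theorem, in particular distinguishing $G_2/P_2$ from the exceptional $G_2/P_1$. If we wanted to avoid citing the theorem, we could argue directly at the Lie algebra level. We have $\operatorname{Lie}(H)\subset\mathfrak{gl}(\Lieg)/\C$, and $\operatorname{Lie}(H)$ is a $G$-stable subspace of $\mathfrak{gl}(\Lieg)/\C\cong\Lieg\otimes\Lieg^*/\C$ containing $\ad\Lieg$. It consists of vector fields tangent to the cone, so it is contained in $H^0(G/P_\theta,T)$. We would then compute $H^0(G/P_\theta,T)=\Lieg$ by Bott–Borel–Weil, using that $T_{G/P_\theta}$ is filtered by the homogeneous bundles associated to $\Lieg_{-1}$ and $\Lieg_{-2}$.

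Either route gives $\operatorname{Lie}(H)=\Lieg$, hence $H=G$.
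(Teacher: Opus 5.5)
Your proposal is correct and is essentially the paper's proof. You first show that the restriction map $\Stab_{\PGL(\Lieg)}(G/P_\theta)^\circ\to\Aut(G/P_\theta)^\circ$ is injective because the cone spans $\Lieg$, then apply Demazure's theorem with the same three exceptions, none of which is $(A_2,B)$, $(G_2,P_2)$, $(F_4,P_1)$ or $(E_8,P_8)$. The one step to tighten is injectivity. Spanning alone does not give a vector of the cone with all coordinates nonzero in a basis taken from the cone: the union of the coordinate axes spans but has no such vector. You need the irreducibility of $\Ominbar(\Lieg)$. The cone lies in no coordinate hyperplane, since it contains each basis vector, so by irreducibility it does not lie in their union; this is the input the paper uses in its eigenspace argument.
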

\begin{proof}

    We first show that the restriction homomorphism $$\res:\Stab_{\PGL(\Lieg)}(G/P_\theta)^\circ\rightarrow\Aut(G/P_\theta)$$ is injective. Let $[A]\in\ker(\res)$, with representative $A\in\GL(\Lieg)$. Then every point of $G/P_\theta$ is a one-dimensional eigensubspace of $A$. Let $\lambda_1,\cdots,\lambda_k$ be all the eigenvalues of $A$, and $V_{\lambda_1},\cdots,V_{\lambda_k}$ be the corresponding eigen-subspaces. Then we have $$G/P_\theta\subset\PP(V_{\lambda_1})\cup\cdots\cup\PP(V_{\lambda_k}).$$
    Since $G/P_\theta$ is irreducible, it lies in one of these projective spaces. Since $G/P_\theta$ spans $\PP(\Lieg)$, the corresponding eigenspace is $\Lieg$. Thus $A$ is scalar, and the homomorphism is injective.

    Demazure's theorem  \cite[pp.~181--182, Th\'eor\`eme~1]{Demazure} gives $\Aut(G/P_\theta)^\circ=G$, except in the cases $(C_n,P_1)$, $(B_n,P_n)$, and $(G_2,P_1)$. But none of the four cases of $\PP(\Ominbar)$ above lies in these exceptions.
So
    \[
        G\subseteq
        \Stab_{\PGL(\Lieg)}(G/P_\theta)^\circ
        \hookrightarrow\Aut(G/P_\theta)^\circ=G,
    \]
    and the first inclusion is an equality. This proves the proposition.
\end{proof}

\section{A bound for fixed subspaces.}\label{fixedbound}

Adopt the Bourbaki labeling $\alpha_1,\ldots,\alpha_r$ of the simple roots. Write the highest root $$\theta=\sum_i m_i\alpha_i.$$ For $1\le i\le r$, set
\[
    F_i=\left\{\beta=\sum_{j=1}^r b_j\alpha_j\in\Phi:b_i=m_i\right\}.
\]
Intuitively, they are collections of ``high'' roots.
\begin{proposition}\label{rootcounts}
    For root systems of type $A_2,G_2,F_4,E_8$, respectively, $$\max_i|F_i|=2,2,7,14.$$
\end{proposition}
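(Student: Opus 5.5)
The plan is a direct computation, organized so that each $|F_i|$ becomes the dimension of a known representation. Fix $i$ and grade $\Lieg$ by the coefficient of $\alpha_i$: for $k\in\mathbb Z$ let $\Lieg^{(i)}_k$ be the sum of the root spaces $\Lieg_\beta$ with $b_i=k$, adding $\h$ when $k=0$. Since $m_i$ is the largest possible value of $b_i$, the set $F_i$ indexes a basis of the top piece $\Lieg^{(i)}_{m_i}$. Hence
\[
|F_i|=\dim\Lieg^{(i)}_{m_i}.
\]

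Each $\Lieg^{(i)}_k$ is a module for the Levi subalgebra $\mathfrak l_i=\Lieg^{(i)}_0$. The Dynkin diagram of $[\mathfrak l_i,\mathfrak l_i]$ is that of $\Lieg$ with node $i$ removed. I will use the standard fact that each nonzero graded piece $\Lieg^{(i)}_k$ with $k\neq0$ is an irreducible $\mathfrak l_i$-module. This is a result of Kostant, and of Azad--Barry--Seitz in characteristic zero. The top piece $\Lieg^{(i)}_{m_i}$ has highest weight $\theta$. Its lowest weight is the unique root in $F_i$ that is minimal for the partial order given by the other simple roots. So $|F_i|$ can be read off from the Weyl dimension formula for $\mathfrak l_i$. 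As a cross-check, I will also count the roots with $b_i=m_i$ directly from the lists of positive roots in \cite[Planches I and VII--IX]{Bourbaki}.

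\textbf{Small ranks.} For $A_2$ we have $\theta=\alpha_1+\alpha_2$, so $m_1=m_2=1$ and $F_1=F_2=\{\alpha_1,\alpha_1+\alpha_2\}$; the maximum is $2$. For $G_2$ in Bourbaki labeling we have $\theta=3\alpha_1+2\alpha_2$. The roots with $b_1=3$ are $3\alpha_1+\alpha_2$ and $3\alpha_1+2\alpha_2$, and the only root with $b_2=2$ is $\theta$ itself. So the maximum is again $2$.

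\textbf{$F_4$.} Here $\theta=2\alpha_1+3\alpha_2+4\alpha_3+2\alpha_4$. The four counts are:
\begin{itemize}
\item $i=1$: Levi of type $C_3$; the top piece is $\Lieg^{(1)}_2=\C e_\theta$, so $|F_1|=1$. This is the contact grading.
\item $i=4$: Levi of type $B_3$; the top piece is the $7$-dimensional vector representation, so $|F_4|=7$.
\item $i=2$: Levi of type $A_1\times A_2$; the top piece is $\Lieg^{(2)}_3$, which I expect to be $\C^2\otimes\C$, of dimension $2$.
\item $i=3$: Levi of type $A_2\times A_1$; the top piece $\Lieg^{(3)}_4$ should be small in the same way.
\end{itemize}
For $i=2,3$ I will confirm the dimensions by listing the roots with $b_2=3$, respectively $b_3=4$. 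These should all be smaller than $7$, giving maximum $7$.

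\textbf{$E_8$.} Here $\theta=2\alpha_1+3\alpha_2+4\alpha_3+6\alpha_4+5\alpha_5+4\alpha_6+3\alpha_7+2\alpha_8$. The key cases are:
\begin{itemize}
\item $i=8$: Levi $E_7$ with pieces of dimensions $56$ and $1$, so $|F_8|=1$.
\item $i=1$: Levi $D_7$ with $\Lieg^{(1)}_1$ a half-spin module of dimension $64$ and $\Lieg^{(1)}_2$ the vector module, so $|F_1|=14$.
\item $i=2$: Levi $A_7$ with graded pieces $\Lambda^3,\Lambda^6,\Lambda^1$ of dimensions $56,28,8$, so $|F_2|=8$.
\end{itemize}
The remaining nodes $3,\dots,7$ have larger $m_i$, and their top pieces are small tensor products for Levis of type $A_1\times A_6$, $A_1\times A_2\times A_4$, $A_4\times A_3$, $D_5\times A_2$, and $E_6\times A_1$. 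For example, for $i=7$ the top piece is $\Lieg^{(7)}_3=\C\otimes\C^2$, so $|F_7|=2$. For each of these nodes I will identify the top weight and compute its dimension, checking that it is at most $14$.

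The main obstacle is exactly these middle nodes of $E_8$, and to a lesser extent $F_4$, where the highest weight of the top piece must be identified correctly. The safest route is a direct count, in each case, of the roots in Bourbaki's list whose $i$-th coefficient equals $m_i$. This is finite but tedious, and I would do it by a computer check or by the symmetry $\beta\mapsto\theta-\beta$ restricted to the positive roots.
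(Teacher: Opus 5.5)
Your argument is correct and takes a different route from the paper. The paper proves the proposition by brute force. It extracts from Bourbaki's tables every root lying in some $F_i$, records which indices it belongs to, and counts, getting $(2,2)$, $(2,1)$, $(1,2,3,7)$ and $(14,8,7,5,4,3,2,1)$. You instead identify $|F_i|$ with $\dim\Lieg^{(i)}_{m_i}$ and compute it as the dimension of an irreducible $\mathfrak l_i$-module with highest weight $\theta$; every value you state explicitly agrees with the paper's. The paper's table is elementary, can be checked line by line, and exhibits the roots themselves. Your approach explains the numbers: each $|F_i|$ is the dimension of a fundamental (or trivial) representation of a Levi factor, and the $E_8$ maximum $14$ is the vector representation of $D_7$. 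It also needs no root lists.

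The cases you leave open are $F_4$ with $i=3$ and $E_8$ with $i=3,\dots,6$. You call these the main obstacle, but your framework closes them in one line. The Levi highest weight of the top piece is given by the pairings $\langle\theta,\alpha_j^\vee\rangle$ for $j\ne i$. Since $\theta=\omega_8$, $\omega_1$, $\omega_2$ in types $E_8$, $F_4$, $G_2$, these pairings vanish except at one node. So for $E_8$ and $i\ne 8$, the top piece is the fundamental representation at node $8$ of the Levi component containing node $8$. For $i=3,4,5,6,7$ this component has type $A_6,A_4,A_3,A_2,A_1$ with node $8$ at an end, which gives dimensions $7,5,4,3,2$. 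For $F_4$ with $i=3$ you get the standard representation of the $A_2$ on nodes $1,2$, of dimension $3$.

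You also do not need Kostant or Azad--Barry--Seitz. An $\mathfrak l_i$-highest weight vector in $\Lieg^{(i)}_{m_i}$ is killed by $\Lieg_{\alpha_i}$ for degree reasons. Hence it is a highest weight vector of the adjoint representation and lies in $\Lieg_\theta$, so the top piece is irreducible.

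One small slip: in type $A_2$ you wrote $F_2=\{\alpha_1,\alpha_1+\alpha_2\}$, but in fact $F_2=\{\alpha_2,\alpha_1+\alpha_2\}$. The count is unaffected.
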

\begin{proof}
    For each type, the highest root is
    \[
    \begin{aligned}
        \theta^{A_2}
            &=\alpha_1^{A_2}+\alpha_2^{A_2},\\
        \theta^{G_2}
            &=3\alpha_1^{G_2}+2\alpha_2^{G_2},\\
        \theta^{F_4}
            &=2\alpha_1^{F_4}+3\alpha_2^{F_4}
              +4\alpha_3^{F_4}+2\alpha_4^{F_4},\\
        \theta^{E_8}
            &=2\alpha_1^{E_8}+3\alpha_2^{E_8}
              +4\alpha_3^{E_8}+6\alpha_4^{E_8}\\
            &\quad+5\alpha_5^{E_8}+4\alpha_6^{E_8}
              +3\alpha_7^{E_8}+2\alpha_8^{E_8}.
    \end{aligned}
    \]
    For each type, write a root $\beta=\sum_j b_j\alpha_j$ as its coefficient row $(b_1,\ldots,b_r)$. The following tables list the roots in $\bigcup_iF_i$ and the indices $i$ for which $\beta\in F_i$.
    {\small
    \[
    \begin{array}{c|c}
    \multicolumn{2}{c}{A_2}\\
    (b_1,b_2)&\{i:\beta\in F_i\}\\ \hline
    (1,1)&1,2\\
    (1,0)&1\\
    (0,1)&2\\[3pt]
    \multicolumn{2}{c}{G_2}\\
    (b_1,b_2)&\{i:\beta\in F_i\}\\ \hline
    (3,2)&1,2\\
    (3,1)&1\\[3pt]
    \multicolumn{2}{c}{F_4}\\
    (b_1,b_2,b_3,b_4)&\{i:\beta\in F_i\}\\ \hline
    (2,3,4,2)&1,2,3,4\\
    (1,3,4,2)&2,3,4\\
    (1,2,4,2)&3,4\\
    (1,2,3,2)&4\\
    (1,2,2,2)&4\\
    (1,1,2,2)&4\\
    (0,1,2,2)&4
    \end{array}
    \qquad
    \begin{array}{c|c}
    \multicolumn{2}{c}{E_8}\\
    (b_1,\ldots,b_8)&\{i:\beta\in F_i\}\\ \hline
    (2,3,4,6,5,4,3,2)&1,2,3,4,5,6,7,8\\
    (2,3,4,6,5,4,3,1)&1,2,3,4,5,6,7\\
    (2,3,4,6,5,4,2,1)&1,2,3,4,5,6\\
    (2,3,4,6,5,3,2,1)&1,2,3,4,5\\
    (2,3,4,6,4,3,2,1)&1,2,3,4\\
    (2,3,4,5,4,3,2,1)&1,2,3\\
    (2,3,3,5,4,3,2,1)&1,2\\
    (2,2,4,5,4,3,2,1)&1,3\\
    (2,2,3,5,4,3,2,1)&1\\
    (1,3,3,5,4,3,2,1)&2\\
    (2,2,3,4,4,3,2,1)&1\\
    (2,2,3,4,3,3,2,1)&1\\
    (2,2,3,4,3,2,2,1)&1\\
    (2,2,3,4,3,2,1,1)&1\\
    (2,2,3,4,3,2,1,0)&1
    \end{array}
    \]
    }
    Counting the occurrences of each index gives
    \[
        (|F_i|)=(2,2),\qquad(2,1),\qquad(1,2,3,7),\qquad
        (14,8,7,5,4,3,2,1),
    \]
    in types $A_2$, $G_2$, $F_4$, and $E_8$, respectively. The largest entry in each sequence proves the proposition.
\end{proof}

\begin{definition}
    A representation $\rho:\Gm\to\GL(V)$ is \emph{one-sided} if its weights are all nonnegative or all nonpositive. And its fixed subspace is $$V^{\rho(\Gm)}\coloneqq\{v\in V:\rho(t)v=v,\forall t\in\Gm\}.$$
\end{definition}

\begin{lemma}\label{onesided}
    Let $\rho:\Gm\to\GL(\Lieg)$ be a nontrivial one-sided representation preserving $\Ominbar(\Lieg)$. Then $\dim\Lieg^{\rho(\Gm)}\le\max_i|F_i|$.
\end{lemma}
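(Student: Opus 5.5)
Since $\rho(\Gm)$ preserves $\Ominbar(\Lieg)$, its image in $\PGL(\Lieg)$ preserves $G/P_\theta=\PP(\Omin)$, which is the projectivization of the cone minus the origin. Being connected, this image lies in $\Stab_{\PGL(\Lieg)}(G/P_\theta)^\circ$, and Proposition~\ref{projectivestabilizer} identifies that group with $G$. Hence there is a homomorphism $\Gm\to G$ and a character $\chi:\Gm\to\Gm$ with
\[
    \rho(t)=\chi(t)\,\Ad(\mu(t))
\]
for some cocharacter $\mu$ of $G$; if needed, we first pass to a finite cover $t\mapsto t^n$, which changes neither the fixed subspace nor one-sidedness. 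Conjugating by $G$, which also does not affect the dimension of the fixed space, we may assume $\mu$ lands in the maximal torus with Lie algebra $\h$ and is dominant. Then $\rho(t)$ acts on $\Lieg_\beta$ by $t^{c+\langle\beta,\mu\rangle}$ and on $\h$ by $t^{c}$, where $\chi(t)=t^c$.

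Next we exploit one-sidedness. The weights of $\rho$ are $c$ (on $\h$) together with $c+\langle\beta,\mu\rangle$ for $\beta\in\Phi$. Since $\Phi=-\Phi$, they form a set symmetric about $c$, with extremes $c\pm\langle\theta,\mu\rangle$. If $\mu=0$, then $\rho$ is nontrivial only when $c\ne0$, and then the fixed space is $0$. Otherwise, $\langle\theta,\mu\rangle>0$ by dominance. Suppose the weights are all $\ge0$; the nonpositive case is symmetric. Then $c\ge\langle\theta,\mu\rangle>0$, so $\h$ is not fixed, and a root space $\Lieg_\beta$ is fixed only if $\langle\beta,\mu\rangle=-c\le-\langle\theta,\mu\rangle$. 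This forces $\beta=-\beta'$ with $\langle\beta',\mu\rangle=\langle\theta,\mu\rangle$, the maximal possible value. Thus
\[
    \dim\Lieg^{\rho(\Gm)}=\#\{\beta\in\Phi:\langle\beta,\mu\rangle=\langle\theta,\mu\rangle\}.
\]

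The remaining step is combinatorial. Write $\mu=\sum_i n_i\varpi_i^\vee$ with $n_i\ge0$, not all zero. For $\beta=\sum b_j\alpha_j$ we have $\langle\theta-\beta,\mu\rangle=\sum_i n_i(m_i-b_i)$. If $\beta$ is positive, every term is $\ge0$; if $\beta$ is negative, the sum is strictly positive. So equality holds exactly when $b_i=m_i$ for every $i$ with $n_i>0$. Choosing one such $i$, the set of $\beta$ attaining equality is contained in $F_i$, and therefore the count is at most $|F_i|\le\max_i|F_i|$.

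The main obstacle is the first step: turning "preserves $\Ominbar$" into "lies in $\Gm\cdot G$". Proposition~\ref{projectivestabilizer} gives this only projectively, so we must lift the cocharacter $\Gm\to G\subset\PGL(\Lieg)$ to a genuine cocharacter of $G$ times a scalar. This requires handling the isogeny $\GL(\Lieg)\to\PGL(\Lieg)$ carefully through the finite cover trick mentioned above.
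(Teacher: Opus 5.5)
Your proof is correct and follows essentially the paper's argument: both use Proposition~\ref{projectivestabilizer} to write $\rho(t)=t^c\Ad(\mu(t)^{\pm1})$ with $\mu$ dominant, then use one-sidedness to show that a fixed root (up to sign) pairs with $\mu$ exactly as $\theta$ does, and conclude that it lies in $F_i$ for any $i$ where $\mu$ has a nonzero coefficient. Two small remarks: the finite-cover step is unnecessary, since $G$ is adjoint and embeds in $\PGL(\Lieg)$, so $\Ad\circ\bar\rho$ already lifts $\bar\rho$ and $\rho(t)\Ad(\mu(t))^{-1}$ is automatically a character; and your displayed dimension formula should be the inequality $\le$, with equality only when $c=\langle\theta,\mu\rangle$ (otherwise the fixed space is zero).
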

\begin{proof}
    Replace $\rho(t)$ by $\rho(t^{-1})$ if necessary, so that all weights are nonnegative. Let $\bar\rho:\Gm\to\PGL(\Lieg)$ be the homomorphism induced by $\rho$. Its image is connected and preserves $G/P_\theta$. Hence
    \[
        \bar\rho(\Gm)\subset
        \Stab_{\PGL(\Lieg)}(G/P_\theta)^\circ
    \]
    By Proposition~\ref{projectivestabilizer}, $\bar\rho$ is a cocharacter of $G$. After conjugating $\rho$ by an element of $G$, we may assume that $\bar\rho$ is anti-dominant. This does not change the weights of $\rho$ or the dimension of its fixed subspace. Thus
    \[
        \bar\rho(t)=\Ad(\mu(t)^{-1})
    \]
    for a dominant cocharacter $\mu:\Gm\to G$. Let $\omega_1^\vee,\ldots,\omega_r^\vee$ be the fundamental coweights. Since $G$ is adjoint, they are cocharacters of $G$.
    Since $\rho(t)\Ad(\mu(t))$ is scalar and depends homomorphically on $t$, these scalars form a character of $\Gm$. Thus, for some $c\in\mathbb Z$,
    \[
        \rho(t)=t^c\Ad(\mu(t)^{-1}),\qquad
        \mu=\sum_i a_i\omega_i^\vee,\quad a_i\in\mathbb Z_{\ge0}.
    \]
    The $\rho$-weight on $\h$ is $c$, and its weight on $\Lieg_\alpha$ is $c-\langle\alpha,\mu\rangle$. Thus $c\ge0$. If $c=0$, nonnegativity on $\Lieg_\alpha$ and $\Lieg_{-\alpha}$ gives $\langle\alpha,\mu\rangle=0$ for every root $\alpha$. Then $\mu=0$ and $\rho$ is trivial. 
    Therefore $c>0$. The Cartan subalgebra has no fixed vector, and the fixed subspace is the sum of the root spaces for which $\langle\alpha,\mu\rangle=c$.

    If the fixed subspace is zero, there is nothing to prove. Suppose $\Lieg^{\rho(\Gm)}$ is nonzero and contains a root space $\Lieg_\beta$. Then $\mu\neq 0$, and we choose $i$ with $a_i>0$. Write $$\beta=\sum_j b_j\alpha_j.$$ Then $\langle\beta,\mu\rangle=c>0$, so $\beta$ is positive. Since $\mu$ is dominant, $\langle\theta,\mu\rangle\ge\langle\beta,\mu\rangle=c$. The weight on $\Lieg_\theta$ is nonnegative, so $\langle\theta,\mu\rangle\le c$. Therefore
    \[
        0=\langle\theta-\beta,\mu\rangle
        =\sum_j a_j(m_j-b_j).
    \]
    Every summand is nonnegative because $\theta-\beta$ is a nonnegative integral combination of simple roots. Since $a_i>0$, we have $b_i=m_i$. Thus every fixed root lies in $F_i$. The root spaces are one-dimensional, so the fixed subspace has dimension at most $|F_i|\le\max_j|F_j|$.
\end{proof}

\section{Quasi-affine varieties and affinizations.}\label{quasiaffinesection}

All results in this section are well known. We include them here only for completeness.

\begin{definition}[Quasi-affine variety]\label{quasiaffinedefinition}
    A variety $X$ is \emph{quasi-affine} if it is isomorphic to an open subvariety of an affine variety \cite[Definition~28.19.1]{Stacks}.
\end{definition}

\begin{definition}[Affinization]\label{affinizationdefinition}
    Let $X$ be a variety. Set
    \[
        A_X=\Gamma(X,\OO_X),
        \qquad
        X^\aff=\Spec A_X.
    \]
    By \cite[Lemma~26.6.4]{Stacks}, the identity homomorphism of $A_X$ defines a canonical morphism
    \[
        \iota_X:X\longrightarrow X^\aff.
    \]
    We call the affine scheme $X^\aff$ the \emph{affinization} of $X$. It need not be a variety because $A_X$ need not be finitely generated.
\end{definition}

\begin{proposition}\label{affinizationmap}
    Let $X$ be a quasi-affine variety. Then the canonical map $\iota_X:X\to X^\aff$ is an open immersion.
\end{proposition}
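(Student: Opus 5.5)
We will prove Proposition~\ref{affinizationmap} by reducing to the standard fact that a quasi-compact open subscheme of an affine scheme is covered by basic open sets $D(f)$, on which the affinization map is visibly an isomorphism.

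Fix an open immersion $j:X\hookrightarrow Y=\Spec R$ with $Y$ an affine variety. Restriction of functions gives a ring homomorphism $R\to A_X$, hence a morphism $\pi:X^\aff\to Y$. By the functoriality in \cite[Lemma~26.6.4]{Stacks}, $\pi\circ\iota_X=j$. Because $X$ is open in the noetherian space $Y$, it is a finite union of basic opens $D(f_1),\dots,D(f_n)$ with $f_k\in R$. Let $g_k$ denote the image of $f_k$ in $A_X$.

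\textbf{Step 1.} For each $k$ we have $D(f_k)\subset X$. Hence the localization $(A_X)_{g_k}$ is $\Gamma(D(f_k),\OO_X)$. This is the standard lemma that, on a quasi-compact quasi-separated scheme, $\Gamma(X_f,\OO)=\Gamma(X,\OO)_f$; here $X_{g_k}=D(f_k)$ because $D(f_k)\subset X$. Since $D(f_k)$ is affine, $\Gamma(D(f_k),\OO_X)=R_{f_k}$. Thus $(A_X)_{g_k}\cong R_{f_k}$, compatibly with the maps from $R$.

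\textbf{Step 2.} By Step 1, $\iota_X$ restricted to $D(f_k)=X_{g_k}$ is the map $\Spec R_{f_k}\to\Spec (A_X)_{g_k}$. This map is an isomorphism onto the principal open $D(g_k)\subset X^\aff$. Gluing over $k$, $\iota_X$ is a local isomorphism onto the open set $U=\bigcup_k D(g_k)$.

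\textbf{Step 3.} It remains to show that $\iota_X$ is injective, and hence an isomorphism onto $U$. Injectivity follows because $\pi\circ\iota_X=j$ is injective. Finally, $\iota_X$ is a homeomorphism onto $U$, and on each $D(g_k)$ the sheaf maps are isomorphisms. Therefore $\iota_X:X\to U$ is an isomorphism, and $\iota_X$ is an open immersion.

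The main obstacle is Step 1, the identification $\Gamma(X,\OO_X)_{g}=\Gamma(X_{g},\OO_X)$. This needs quasi-compactness and quasi-separatedness of $X$, both automatic for a variety, and it is exactly what makes the argument work without knowing that $A_X$ is finitely generated. We will cite \cite{Stacks} for it rather than reprove it.
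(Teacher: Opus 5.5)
Your proof is correct, and it takes the same approach as the paper. The paper's proof simply cites \cite[Lemma~28.19.4]{Stacks}, and your argument is the standard proof of that lemma: cover $X$ by finitely many basic opens $D(f_k)\subset\Spec R$, then use $\Gamma(X,\OO_X)_{g}=\Gamma(X_{g},\OO_X)$ for quasi-compact quasi-separated $X$ to get isomorphisms $X_{g_k}\cong D(g_k)$. Your injectivity step via $\pi\circ\iota_X=j$ is valid but not needed, since $\iota_X^{-1}(D(g_k))=X_{g_k}$ and being an isomorphism is local on the target.
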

\begin{proof}
    This is \cite[Lemma~28.19.4]{Stacks}.
\end{proof}

\begin{proposition}\label{vectorbundlequasiaffine}
    Let $p:E\to X$ be a vector bundle of finite rank over a quasi-affine variety. Then $E$ is quasi-affine. If $X$ is smooth, then $E$ is smooth.
\end{proposition}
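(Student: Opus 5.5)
The plan is to reduce both claims to local statements on an affine open cover of $X$. Quasi-affineness will come from realizing $E$ as an open subvariety of an affine variety, built from affine pieces and the open immersion of Proposition~\ref{affinizationmap}. Smoothness is local, so it will follow from local triviality.

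First I embed $X$. By Proposition~\ref{affinizationmap}, $X$ is open in $X^\aff$, but $A_X$ need not be finitely generated. So I instead fix an open immersion $X\hookrightarrow Y$ with $Y$ affine, as given by Definition~\ref{quasiaffinedefinition}. The basic open sets $D(f)\cap X=D(f)$, for $f\in\Gamma(Y,\OO_Y)$ with $D(f)\subset X$, form a basis of the topology of $X$. Hence I can choose finitely many $f_1,\dots,f_n$ such that $X=\bigcup D(f_k)$, each $D(f_k)$ is affine, and $E$ is trivial over each $D(f_k)$. The pullbacks $g_k=p^*f_k$ are regular functions on $E$ with $E_{g_k}=p^{-1}(D(f_k))\cong D(f_k)\times\mathbb A^m$. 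Each of these is affine.

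Next I use the standard criterion: a quasi-compact separated scheme $Z$ is quasi-affine if there exist global sections $g_1,\dots,g_n\in\Gamma(Z,\OO_Z)$ such that the $Z_{g_k}$ are affine and cover $Z$. This is the same characterization used in \cite[Lemma~28.19.4]{Stacks} and its neighbours. Concretely, I will show that $\iota_E:E\to E^\aff$ restricts to an isomorphism $E_{g_k}\to D(g_k)\subset E^\aff$. Since $E$ is quasi-compact and separated, $\Gamma(E_{g_k},\OO)=\Gamma(E,\OO)_{g_k}$, which gives this isomorphism. So $\iota_E$ is an open immersion. To obtain an open subvariety of an affine variety of finite type, I take a finitely generated subalgebra $R\subset A_E$ containing the $g_k$ together with generators of each $\Gamma(E_{g_k},\OO)$, written as fractions over powers of $g_k$. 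Then $E\to\Spec R$ is still an open immersion, since on each $D(g_k)$ it is an isomorphism onto $\Spec R_{g_k}=\Spec\Gamma(E_{g_k},\OO)$.

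The main technical point is the second step: producing a finitely generated $R$ and checking that the glued map is an immersion, with injectivity coming from separatedness. I expect this to be routine via the cited Stacks lemmas, and I would simply invoke them.

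Smoothness is then immediate. Locally $E\cong D(f_k)\times\mathbb A^m$, and a product of a smooth variety with affine space is smooth.
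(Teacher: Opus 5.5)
Your argument is correct, but it takes a different route from the paper.

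The paper argues formally, at the level of morphisms, by citing the Stacks Project. The projection $p$ is affine, hence quasi-affine. The structure morphism $X\to\Spec\C$ is quasi-affine. A composition of quasi-affine morphisms is quasi-affine. This part never uses local triviality, so it applies verbatim to any affine morphism onto a quasi-affine base.

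You instead work by hand. Pulling back principal opens $D(f_k)\subset X$ of the ambient affine $Y$ gives global functions $g_k=p^*f_k$ on $E$. Their nonvanishing loci are the affine sets $p^{-1}(D(f_k))\cong D(f_k)\times\mathbb A^m$. The identity $\Gamma(E_{g_k},\OO)=\Gamma(E,\OO)_{g_k}$, valid because $E$ is quasi-compact and separated, then makes $E\to\Spec\Gamma(E,\OO)$ an open immersion.

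What your route buys is the final step. The Stacks notion of quasi-affine only gives an open subscheme of some affine scheme, possibly not of finite type. The paper's Definition asks for an open subvariety of an affine variety, and the paper leaves that passage implicit. Your finitely generated subalgebra $R$, with $R_{g_k}=\Gamma(E_{g_k},\OO)$, supplies it explicitly.

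One small correction: injectivity of $E\to\Spec R$ does not come from separatedness. The preimage of $D(g_k)\subset\Spec R$ is exactly $E_{g_k}$, and it maps isomorphically onto $D(g_k)$. So the morphism is an isomorphism over each member of an open cover of its image $\bigcup_k D(g_k)$, and injectivity is automatic. (Quasi-separatedness is needed only for the localization identity.)

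Your smoothness argument is the same as the paper's.
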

\begin{proof}
    The projection $p$ is affine by \cite[Definition~27.6.2]{Stacks}. Thus $p$ is quasi-affine by \cite[Definition~29.13.1]{Stacks}. The structure morphism $X\to\Spec\C$ is quasi-affine. Their composition is quasi-affine by \cite[Lemma~29.13.4]{Stacks}. Thus $E$ is quasi-affine.

    Suppose that $X$ is smooth. Every local trivialization identifies $p^{-1}(U)$ with $U\times\mathbb A^r$ for a smooth open subvariety $U\subset X$. Thus $E$ is smooth.
\end{proof}

\begin{lemma}\label{cotangentgrading}
    Let $X$ be a smooth quasi-affine variety. Then $T^*X$ is smooth and quasi-affine. Fiber dilation gives the weight decomposition
    \[
        \Gamma(T^*X,\OO)
        =\bigoplus_{k\ge0}H^0(X,\Sym^kT_X).
    \]
    If $\Gamma(T^*X,\OO)$ is finitely generated, then $\Gamma(X,\OO_X)$ is finitely generated.
\end{lemma}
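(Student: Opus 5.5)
The first assertion, that $T^*X$ is smooth and quasi-affine, is the special case $E=T^*X$ of Proposition~\ref{vectorbundlequasiaffine}. This applies because $X$ smooth implies the cotangent sheaf $\Omega_X$ is locally free of finite rank $\dim X$, so its total space $T^*X=\mathbf{Spec}_X\Sym^\bullet T_X$ is a vector bundle of finite rank over $X$.

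For the weight decomposition, let $p:T^*X\to X$ be the projection. Since $p$ is affine,
\[
    \Gamma(T^*X,\OO)=\Gamma(X,p_*\OO_{T^*X})=\Gamma\Bigl(X,\bigoplus_{k\ge0}\Sym^kT_X\Bigr).
\]
Here $p_*\OO_{T^*X}=\Sym^\bullet(\Omega_X^\vee)=\Sym^\bullet T_X$, because functions on $T^*X$ that are polynomial of degree $k$ along the fibers are exactly sections of $\Sym^k T_X$. Global sections on a variety, which is quasi-compact, commute with arbitrary direct sums of quasi-coherent sheaves. Hence the right side equals $\bigoplus_k H^0(X,\Sym^kT_X)$. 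The $\Gm$-action by fiber dilation $t\cdot\xi=t\xi$ acts on $\Sym^kT_X$ by $t^k$, so this is precisely the weight decomposition, with only nonnegative weights occurring.

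For finite generation, the degree-zero piece is $H^0(X,\Sym^0T_X)=\Gamma(X,\OO_X)$, the subring of $\Gamma(T^*X,\OO)$ consisting of pullbacks $p^*f$. The projection onto the weight-$0$ summand, $\pi:\Gamma(T^*X,\OO)\to\Gamma(X,\OO_X)$, is a surjective ring homomorphism. It is multiplicative because the grading is by $\mathbb Z_{\ge0}$: the product of homogeneous elements of degrees $a,b$ has degree $a+b$, which is $0$ only if $a=b=0$. Equivalently, $\pi$ is restriction to the zero section $X\hookrightarrow T^*X$. If $g_1,\dots,g_n$ generate $\Gamma(T^*X,\OO)$, then $\pi(g_1),\dots,\pi(g_n)$ generate $\Gamma(X,\OO_X)$, so $\Gamma(X,\OO_X)$ is a quotient of a finitely generated algebra.

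The only point needing care is the interchange of $\Gamma$ with the infinite direct sum, which relies on quasi-compactness (and separatedness) of $X$. I expect nothing else to be an obstacle.
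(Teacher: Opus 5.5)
Your proof is correct, and its first two parts match the paper. You get quasi-affineness and smoothness from Proposition~\ref{vectorbundlequasiaffine}, and the weight decomposition from $p_*\OO_{T^*X}=\Sym T_X$ together with commuting $\Gamma$ with the direct sum. The finite-generation step takes a different route. The paper treats $B=\Gamma(T^*X,\OO)$ as a finitely generated algebra with a $\Gm$-action and applies Hilbert's finite-generation theorem for invariants of the reductive group $\Gm$, so that $B^{\Gm}=B_0$ is finitely generated. You instead use that the grading is by $\mathbb Z_{\ge0}$: then $B_{>0}$ is an ideal and $B_0\cong B/B_{>0}$. So $B_0$ is a quotient of $B$, and the degree-zero components of any generators of $B$ generate it. Your argument is more elementary and gives explicit generators. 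It also identifies the projection with restriction to the zero section, which is exactly the fact the paper later uses in the proof of Theorem~\ref{smalltypes}. The paper's argument relies on heavier machinery, but it does not need the grading to be one-sided. For a $\mathbb Z$-grading, $B_0$ is no longer a quotient of $B$, yet reductivity still gives finite generation. Here nonnegativity is available, so your shortcut loses nothing. Your remark that quasi-compactness alone lets $H^0$ commute with direct sums is also right. The paper cites a statement assuming quasi-compact and separated, which is more than $H^0$ needs.
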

\begin{proof}
    Proposition~\ref{vectorbundlequasiaffine} shows that $T^*X$ is smooth and quasi-affine. Let $p:T^*X\to X$ be the projection. Let $\Gm$ act on $T^*X$ by fiber dilation,
    $t\cdot(x,\xi)=(x,t\xi)$. By \cite[Definition~27.6.1]{Stacks} and \cite[Lemma~27.4.6]{Stacks},
    \[
        p_*\OO_{T^*X}=\Sym T_X=\bigoplus_{k\ge0}\Sym^kT_X.
    \]
    The summand $\Sym^kT_X$ is the weight-$k$ part for fiber dilation. Since $X$ is quasi-compact and separated, global sections commute with this direct sum by \cite[Lemma~30.6.1]{Stacks}. Thus
    \[
        \Gamma(T^*X,\OO)
        =\bigoplus_{k\ge0}H^0(X,\Sym^kT_X).
    \]

    Suppose that $B=\Gamma(T^*X,\OO)$ is finitely generated. The fiber-dilation action makes $B$ a finitely generated $\Gm$-algebra. Since $\Gm$ is reductive, Hilbert's finite-generation theorem shows that $B^{\Gm}$ is finitely generated. The weight decomposition gives
    \[
        B^{\Gm}=B_0=H^0(X,\OO_X)=\Gamma(X,\OO_X),
    \]
    which proves the last assertion.
\end{proof}

\begin{lemma}\label{smallboundary}
    Let $X$ be a smooth quasi-affine variety. If $\Gamma(X,\OO_X)$ is finitely generated, then the canonical map $X\to X^\aff$ is an open immersion and
    \[
        \operatorname{codim}_{X^\aff}(X^\aff\setminus X)\ge2.
    \]
\end{lemma}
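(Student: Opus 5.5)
The plan is to combine Proposition~\ref{affinizationmap} with Hartogs' extension property on the normal variety $X^\aff$. The open immersion part is exactly Proposition~\ref{affinizationmap}, since $X$ is quasi-affine. Finite generation of $A_X=\Gamma(X,\OO_X)$ makes $X^\aff$ an affine variety of finite type. It is irreducible because $X$ is irreducible and $\iota_X(X)$ is dense: a function vanishing on $\iota_X(X)$ is a global function on $X$ that vanishes, hence is zero in $A_X$. So $X^\aff$ is reduced, and its closure of $X$ is all of it.

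Next I would show that $X^\aff$ is normal. Since $X$ is smooth, it is normal, so $A_X=\bigcap_{x\in X}\OO_{X,x}$ inside the function field $K$ of $X$. This is an intersection of integrally closed domains with fraction field $K$, hence integrally closed. The fraction field of $A_X$ is $K$, because $X\subset X^\aff$ is a dense open and so $X^\aff$ and $X$ are birational. Therefore $X^\aff$ is a normal affine variety.

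For the codimension bound, suppose instead that $Z=X^\aff\setminus X$ has an irreducible component $D$ of codimension $1$. Being normal, $X^\aff$ is regular in codimension one, so $\OO_{X^\aff,D}$ is a DVR with valuation $v_D$ on $K$. I then want a regular function on $X$ that is not regular on $X^\aff$; this would contradict $A_X=\Gamma(X^\aff,\OO)$. Choose $g\in A_X$ vanishing on $D$ but not identically on the other components of $Z$. Then choose $h\in A_X$ that vanishes on the remaining components of $Z$ but not on $D$; such $h$ exists by prime avoidance, since those components are closed subsets not containing $D$. Replacing $h$ by a suitable power $h^N$, the function $f=h^N/g$ has $v_D(f)<0$. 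The aim is that $f$ is regular on $X$, so that $f\in A_X$ while $v_D(f)<0$, which is absurd.

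The main obstacle is making this last step rigorous, since $g$ may vanish at points of $X$. The cleaner route I would take is via the height-one primes: a normal noetherian domain satisfies $A=\bigcap_{\mathrm{ht}\,\mathfrak p=1}A_{\mathfrak p}$. Compare this with $A_X=\bigcap_{x\in X}\OO_{X,x}$, which equals the intersection of the DVRs $\OO_{X^\aff,\mathfrak p}$ over height-one primes $\mathfrak p$ meeting $X$, again by normality of $X$. Equality of the two intersections then forces every height-one prime of $A_X$ to meet $X$. To see this, use that in a Krull domain each essential valuation is needed. Concretely, by the approximation theorem for Krull domains, for the prime $D$ there is an element $f\in K$ with $v_D(f)=-1$ and $v_{\mathfrak p}(f)\ge0$ for all other height-one primes. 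This $f$ lies in $A_X$ but not in $A_{D}$, a contradiction. Hence no divisorial component of $Z$ exists, and $\operatorname{codim}_{X^\aff}(X^\aff\setminus X)\ge2$.
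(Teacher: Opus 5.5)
Your proof is correct, but it takes a different route from the paper. The paper's proof is essentially a chain of citations. It gets normality of $\Gamma(X,\OO_X)$ from \cite[Lemma~28.7.9]{Stacks}. It then notes that $X$ and $X^\aff$ have the same ring of functions and reads the codimension bound directly off \cite[Theorem~4.2]{Grosshans}.

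You instead prove both facts by hand. Normality comes from writing $A_X=\bigcap_{x\in X}\OO_{X,x}$ inside the function field. The codimension bound comes from Krull-domain theory. A divisorial component $D$ of $X^\aff\setminus X$ would be an essential valuation of $A_X$ missing from the family $\{A_{\mathfrak p}:\mathrm{ht}\,\mathfrak p=1,\ V(\mathfrak p)\cap X\neq\emptyset\}$. By normality of $X$, that family already cuts out $A_X$, which contradicts the approximation theorem (equivalently, the irredundance of essential valuations). Your version is self-contained and shows that, beyond the open immersion, only normality of $X$ and noetherianity of $A_X$ are used. The paper's version is shorter but hides this mechanism inside the reference.

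One remark on your abandoned first attempt. As you noticed, $f=h^N/g$ fails as written, because you chose $h$ to vanish on the other components of the boundary rather than on the other components of $V(g)$. It is easily repaired. Choose $h\notin\mathfrak p_D$ lying in the height-one primes of all other components of $\operatorname{div}(g)$; this is possible by prime avoidance. Then for $N$ large, $h^N/g$ has $v_D<0$ and $v_E\ge 0$ for every other prime divisor $E$. That is exactly the special case of the approximation theorem you need, so the argument can be made fully elementary.
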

\begin{proof}
    By Proposition~\ref{affinizationmap}, the canonical map identifies $X$ with an open subvariety of $Z=X^\aff$. Since $X$ is smooth, it is normal by \cite[Lemma~33.25.4]{Stacks}. Hence $A=\Gamma(X,\OO_X)$ is a normal domain by \cite[Lemma~28.7.9]{Stacks}, so $Z=\Spec A$ is an irreducible normal affine variety. By construction, $X$ and $Z$ have the same ring of regular functions. The codimension statement now follows from \cite[Theorem~4.2]{Grosshans}.
\end{proof}

\section{The tangent representation $\rho$.}\label{tangentsection}

Let $Y$ be an affine variety (hence irreducible). Let $\Gm$ act on $Y$. Let $y_0\in Y$ be a fixed point. Set $A=\C[Y]$, and let $\mathfrak m\subset A$ be the ideal of $y_0$. Write $\gr_{\mathfrak m}A=\bigoplus_{q\ge0}\mathfrak m^q/\mathfrak m^{q+1}$. The \emph{tangent cone} of $Y$ at $y_0$ is $C_{y_0}Y=\Spec\gr_{\mathfrak m}A$, and $T_{y_0}Y=(\mathfrak m/\mathfrak m^2)^*$.

For $t\in\Gm$, set $t^*f(y)=f(t\cdot y)$. Write
\[
    A=\bigoplus_{n\in\mathbb Z}A_n,
    \qquad
    A_n=\{f\in A:t^*f=t^nf\text{ for every }t\in\Gm\}.
\]
Let $\rho$ be the \emph{tangent representation} at $y_0$ of the $\Gm$-action on $Y$, which is the homomorphism
$\rho:\Gm \longrightarrow \GL(T_{y_0}Y)$ with $\rho(t)$ the differential of $z\mapsto t\cdot z$ at $y_0$, i.e.
    \[
        \begin{aligned}
            \rho(t):T_{y_0}Y& \longrightarrow T_{y_0}Y\\
            v &\longmapsto
            \left(\bar f\mapsto v\bigl(\overline{t^*f}\bigr)\right),
        \end{aligned}
    \]
    where $\bar f\in T_{y_0}^*Y=\mathfrak m/\mathfrak m^2$ is the class of $f\in\mathfrak m$.

\begin{lemma}\label{weightsigns}
    Let $\lambda\in\mathbb Z\setminus\{0\}$. If $A_\lambda\ne0$, then $\rho$ has a weight of the same sign as $\lambda$. In particular, if the $\Gm$-action is nontrivial, then $\rho$ is nontrivial.
\end{lemma}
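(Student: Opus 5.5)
Throughout, $\mathfrak m$ is stable under the $\Gm$-action because $y_0$ is fixed. The plan is to reduce the statement to the graded pieces of $\gr_{\mathfrak m}A$ and then read off weights on the cotangent space $\mathfrak m/\mathfrak m^2$. First, each $\mathfrak m^q$ is a $\Gm$-stable ideal, hence a graded subspace: $\mathfrak m^q=\bigoplus_n(\mathfrak m^q\cap A_n)$. Consequently each $\mathfrak m^q/\mathfrak m^{q+1}$ inherits a weight decomposition. The action on $\mathfrak m/\mathfrak m^2$ is dual to $\rho$ up to sign conventions. So the weights of $\rho$ are exactly the negatives (or exactly the same, depending on the convention for $t^*$) of the weights occurring in $\mathfrak m/\mathfrak m^2$. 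I would check this convention carefully from the displayed formula for $\rho(t)$. There $\rho(t)v$ pairs with $\bar f$ as $v(\overline{t^*f})$, so if $\bar f$ has weight $n$ then $\rho(t)$ acts on the dual functional with weight $n$ as well. The sets of weights therefore agree.

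Second, take $0\ne f\in A_\lambda$ with $\lambda\neq0$. Since $t^*$ fixes constants, $A_0$ contains $\C$. Evaluation at the fixed point $y_0$ is $\Gm$-invariant, so $f(y_0)=f(t\cdot y_0)=t^\lambda f(y_0)$. This gives $f(y_0)=0$, i.e.\ $f\in\mathfrak m$. By Krull's intersection theorem in the noetherian local ring $A_{\mathfrak m}$, and since $A$ is a domain, $\bigcap_q\mathfrak m^q=0$. Hence there is a largest $q\ge1$ with $f\in\mathfrak m^q$. The class of $f$ in $\mathfrak m^q/\mathfrak m^{q+1}$ is then a nonzero weight-$\lambda$ vector.

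Third, $\mathfrak m^q/\mathfrak m^{q+1}$ is spanned by products $\bar g_1\cdots\bar g_q$ of classes in $\mathfrak m/\mathfrak m^2$. Using the grading from the first step, we may take the $g_i$ homogeneous, of weights $n_i$ occurring in $\mathfrak m/\mathfrak m^2$. A product of such classes has weight $\sum n_i$. The nonzero weight-$\lambda$ class of $f$ is a combination of such products, so some product has $\sum_i n_i=\lambda$. If all $n_i$ had sign opposite to $\lambda$ or were zero, the sum could not equal $\lambda$. Hence some $n_i$ has the same sign as $\lambda$, and it is a weight of $\rho$.

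For the final claim, suppose the action is nontrivial. Then $t^*$ is nontrivial on $A$, so some $A_\lambda$ with $\lambda\ne0$ is nonzero, and $\rho$ has a nonzero weight. The main point needing care is the graded compatibility of the filtration in the first step, which holds because $\Gm$ is diagonalizable and $\mathfrak m^q$ is stable. The Krull-intersection step uses that $Y$ is irreducible, as assumed.
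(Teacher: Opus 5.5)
Your proof is correct and takes essentially the same route as the paper: you show $f\in\mathfrak m$, use Krull's intersection theorem to get a nonzero weight-$\lambda$ class of $f$ in some $\mathfrak m^q/\mathfrak m^{q+1}$, and then use that this piece is spanned by products of $q$ weight vectors from $\mathfrak m/\mathfrak m^2$, which the paper phrases as the equivariant surjection $\Sym^q(\mathfrak m/\mathfrak m^2)\to\mathfrak m^q/\mathfrak m^{q+1}$. Your explicit check that $\rho$ and the action on $\mathfrak m/\mathfrak m^2$ have the same weights, together with your direct argument that $f(y_0)=0$, fills in points the paper leaves implicit.
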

\begin{proof}
    Choose $0\ne f\in A_\lambda$. Since $y_0$ is fixed, $f\in\mathfrak m$.
    The ring $A$ is a domain, so $A\to A_{\mathfrak m}$ is injective. The local Krull intersection theorem gives $\bigcap_{q\ge1}(\mathfrak mA_{\mathfrak m})^q=0$.
    Hence $\bigcap_{q\ge1}\mathfrak m^q=0$ in $A$. Thus $f\in\mathfrak m^q\setminus\mathfrak m^{q+1}$ for some $q\ge1$. Its image in $\mathfrak m^q/\mathfrak m^{q+1}$ is nonzero and has weight $\lambda$.

    Multiplication gives a $\Gm$-equivariant surjection
    \[
        \Sym^q(\mathfrak m/\mathfrak m^2)
        \longrightarrow
        \mathfrak m^q/\mathfrak m^{q+1}.
    \]
    Hence $\lambda$ is a sum of $q$ weights of $\mathfrak m/\mathfrak m^2$. At least one of them has the same sign as $\lambda$. It follows that $\rho$ has a weight of the same sign as $\lambda$.

    If the action is nontrivial, then $A_\lambda\ne0$ for some $\lambda\ne0$. This proves the last assertion.
\end{proof}

\begin{lemma}\label{tangentcone}
    Suppose that $Y\subset V$ is an affine cone which spans $V$, and that $y_0=0$. Then $T_{y_0}Y=V$ and $C_{y_0}Y=Y$ as closed subschemes of $V$. Moreover, $\rho(\Gm)$ preserves $Y$.
\end{lemma}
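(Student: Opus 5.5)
Throughout, $Y\subset V$ is a closed subvariety stable under scaling, with ideal $I\subset \C[V]=\Sym V^*$. Being stable under scaling means $I$ is homogeneous: $I=\bigoplus_d I_d$ for the standard polynomial grading. We have $A=\Sym V^*/I$, and $\mathfrak m$ is the image of the irrelevant ideal $\mathfrak m_V=\bigoplus_{d\ge1}\Sym^dV^*$. The argument has three steps.

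\emph{Tangent space.} We have $\mathfrak m/\mathfrak m^2=\mathfrak m_V/(\mathfrak m_V^2+I)$. Since $I$ is homogeneous, $(\mathfrak m_V^2+I)\cap \Sym^1V^*=I_1$. Because $Y$ spans $V$, no nonzero linear form vanishes on $Y$, so $I_1=0$. Hence $\mathfrak m/\mathfrak m^2=V^*$, and therefore $T_0Y=V$ canonically.

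\emph{Tangent cone.} Since $I$ is homogeneous, $A$ is graded, $A=\bigoplus_{d\ge0}A^{(d)}$, and $\mathfrak m^q=\bigoplus_{d\ge q}A^{(d)}$. Thus $\mathfrak m^q/\mathfrak m^{q+1}\cong A^{(q)}$, and these isomorphisms are compatible with multiplication, so $\gr_{\mathfrak m}A\cong A$ as graded rings. This identification respects the surjections from $\Sym V^*=\Sym(\mathfrak m/\mathfrak m^2)$: the generators $V^*$ map to their own classes in degree $1$. So the kernels agree, and $C_0Y=Y$ as closed subschemes of $V=T_0Y$.

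\emph{$\rho$ preserves $Y$.} The $\Gm$-action gives automorphisms $t^*$ of $A$ that fix $\mathfrak m$. They therefore preserve the filtration by the powers $\mathfrak m^q$ and induce graded automorphisms $\gr(t^*)$ of $\gr_{\mathfrak m}A$. In degree one, $\gr(t^*)$ is the transpose of $\rho(t)$ on $\mathfrak m/\mathfrak m^2=V^*$, by the definition of $\rho$. Since $\gr_{\mathfrak m}A$ is generated in degree one, $\gr(t^*)$ is the algebra map induced by $\rho(t)^*$ on $\Sym V^*$. This map descends to $\gr_{\mathfrak m}A=\Sym V^*/I$, so it preserves $I$. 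Hence $\rho(t)$ preserves $C_0Y=Y$.

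The main obstacle is bookkeeping: we must check that the identification $\gr_{\mathfrak m}A\cong A$ is compatible with the presentation by $\Sym V^*$, so that the equality $C_0Y=Y$ holds inside $V$ and not just abstractly. We must also check that $\gr(t^*)$ really corresponds to $\rho(t)$, including the direction of the dual. The homogeneity of $I$ handles both points.
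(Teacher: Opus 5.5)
Your proof is correct and follows essentially the same route as the paper's. You get $T_0Y=V$ from $I_1=0$. You get $C_0Y=Y$ from the homogeneity of $I$: you argue via $\gr_{\mathfrak m}A\cong A$, while the paper notes that the lowest-degree forms of a homogeneous ideal generate the ideal itself, which amounts to the same thing. Finally, because $t^*$ preserves every power $\mathfrak m^q$, the embedding $C_0Y\subset T_0Y$ is $\rho$-equivariant. You spell out more of the bookkeeping, such as the duality direction of $\rho(t)$, which the paper leaves implicit.
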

\begin{proof}
    Let $I\subset\Sym(V^*)$ be the homogeneous ideal of $Y$. Since $Y$ spans $V$, we have $I_1=0$. Thus $T_{y_0}Y=V$. By definition, the ideal of $C_{y_0}Y$ is generated by the lowest-degree homogeneous parts of the elements of $I$. Since $I$ is homogeneous, this ideal is $I$. Hence $C_{y_0}Y=Y$ as closed subschemes of $V$.

    The given action need not be the scalar multiplication of the cone. Pullback preserves $\mathfrak m$ and all its powers. Thus the closed embedding $C_{y_0}Y\subset T_{y_0}Y$ is equivariant. Hence $\rho(\Gm)$ preserves $C_{y_0}Y=Y$.
\end{proof}

\begin{definition}[Fixed-point subscheme]\label{fixedsubscheme}
    Let $I_{\mathrm{fix}}=\langle A_n:n\ne0\rangle\subset A$. The quotient $A/I_{\mathrm{fix}}$ is concentrated in degree zero, and $(I_{\mathrm{fix}})_0=\sum_{n>0}A_nA_{-n}$. The \emph{fixed-point subscheme} is
    \[
        Y^{\Gm}=\Spec(A/I_{\mathrm{fix}})
        \cong
        \Spec\left(A_0\Big/\sum_{n>0}A_nA_{-n}\right).
    \]
\end{definition}

\begin{lemma}\label{fixedpoints}
     If $\rho$ is one-sided, then $Y^{\Gm}\cong\Spec A_0$, and
     \[
        T_{y_0}(Y^{\Gm})=(T_{y_0}Y)^{\rho(\Gm)}.
    \]
\end{lemma}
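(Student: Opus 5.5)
Replace $\rho(t)$ by $\rho(t^{-1})$ if necessary, so that all weights of $\rho$ on $T_{y_0}Y$ are nonnegative. Dually, all weights of $\Gm$ on $\mathfrak m/\mathfrak m^2$ are then nonpositive: the pullback action on $T^*_{y_0}Y=\mathfrak m/\mathfrak m^2$ is the transpose of $\rho$, so it has the same weights as $\rho$ up to the sign convention. I will fix the conventions so that the weights occurring in $\mathfrak m/\mathfrak m^2$ all lie on one side, say in $\mathbb Z_{\le0}$. The first step is to show that $A_n=0$ for all $n>0$. This is exactly the argument of Lemma~\ref{weightsigns}. A nonzero $f\in A_n$ lies in $\mathfrak m$, and by Krull it lies in $\mathfrak m^q\setminus\mathfrak m^{q+1}$ for some $q\ge1$. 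Its image then has weight $n$ in a quotient of $\Sym^q(\mathfrak m/\mathfrak m^2)$, so $n$ is a sum of nonpositive weights. This forces $n\le0$.

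Consequently $\sum_{n>0}A_nA_{-n}=0$, and Definition~\ref{fixedsubscheme} gives $Y^{\Gm}\cong\Spec A_0$. More concretely, $I_{\mathrm{fix}}=\bigoplus_{n<0}A_n$, and $A/I_{\mathrm{fix}}=A_0$ via the inclusion $A_0\subset A$. The point $y_0$ corresponds to the maximal ideal $\mathfrak m_0=\mathfrak m\cap A_0$, which is the kernel of $A_0\to A\to A/\mathfrak m=\C$.

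For the tangent space, I compute the cotangent space of $Y^{\Gm}$ at $y_0$ as
\[
  \frac{\mathfrak m}{\mathfrak m^2+I_{\mathrm{fix}}}.
\]
Everything here is $\Gm$-graded, since $\mathfrak m$ is a graded ideal because $y_0$ is fixed. The quotient $\mathfrak m/\mathfrak m^2$ splits as a sum of weight spaces, and $I_{\mathrm{fix}}$ is generated by homogeneous elements of nonzero weight. So the image of $I_{\mathrm{fix}}$ in $\mathfrak m/\mathfrak m^2$ is contained in the nonzero-weight part. It remains to show that this image is all of the nonzero-weight part. A class of nonzero weight $n$ in $\mathfrak m/\mathfrak m^2$ lifts to a homogeneous $f\in\mathfrak m$ of weight $n$, by taking the weight-$n$ component of any lift. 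Such an $f$ lies in $A_n\subset I_{\mathrm{fix}}$. Hence the cotangent space of $Y^{\Gm}$ is $(\mathfrak m/\mathfrak m^2)_0$. Dualizing, $T_{y_0}(Y^{\Gm})$ is the weight-zero part of $T_{y_0}Y$, which is $(T_{y_0}Y)^{\rho(\Gm)}$. Here I use that taking weight-zero parts commutes with duals for finite-dimensional $\Gm$-representations, and that the zero-weight part of $\rho$ corresponds to the zero-weight part of the cotangent action.

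This last identification does not actually need one-sidedness; only the isomorphism $Y^{\Gm}\cong\Spec A_0$ does. The only delicate point is bookkeeping. One must check that the sign conventions for $t^*$ on $A$, on $\mathfrak m/\mathfrak m^2$, and the definition of $\rho$ are consistent, so that one-sidedness of $\rho$ really forces the weights in $\mathfrak m/\mathfrak m^2$ to lie on one side, and hence kills all $A_n$ of one sign. Since replacing $t$ by $t^{-1}$ swaps both signs simultaneously, any consistent convention works.
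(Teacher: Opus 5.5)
Your proposal is correct and follows the paper's proof essentially step for step: you use the argument of Lemma~\ref{weightsigns} to kill the $A_n$ of one sign, deduce $A/I_{\mathrm{fix}}=A_0$, identify the image of $I_{\mathrm{fix}}$ in $\mathfrak m/\mathfrak m^2$ with the nonzero-weight part, and dualize. There is one harmless bookkeeping slip: with the paper's definition, $\rho(t)$ is the transpose of $t^*$ on $\mathfrak m/\mathfrak m^2$, so it has exactly the same weights, not negated ones; since you only need one-sidedness to transfer and then fix the sign without loss of generality, the argument is unaffected.
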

\begin{proof}
    Replacing the action by its inverse if necessary, assume that the weights of $\rho$ are nonnegative. Lemma~\ref{weightsigns} gives $A_n=0$ for every $n<0$.
    Since the negative-weight spaces vanish, we have
    \[
        I_{\mathrm{fix}}=\bigoplus_{n>0}A_n,
        \qquad
        A/I_{\mathrm{fix}}=A_0=A^{\Gm}.
    \]
    Therefore
    \[
        Y^{\Gm}\cong\Spec A^{\Gm}.
    \]

    Observe that the image of $I_{\mathrm{fix}}$ in $\mathfrak m/\mathfrak m^2$ is exactly its positive-weight part:
    \[
        \frac{I_{\mathrm{fix}}+\mathfrak m^2}{\mathfrak m^2}
        =(\mathfrak m/\mathfrak m^2)_{>0}.
    \]
   The maximal ideal of $y_0$ in $A/I_{\mathrm{fix}}$ is $\mathfrak m/I_{\mathrm{fix}}$. Hence
    \[
        T^*_{y_0}(Y^{\Gm})=\frac{\mathfrak m/I_{\mathrm{fix}}}{(\mathfrak m/I_{\mathrm{fix}})^2}=\frac{\mathfrak m}{\mathfrak m^2+I_{\mathrm{fix}}}
        =(\mathfrak m/\mathfrak m^2)_0.
    \]
    Taking duals and using the weight decomposition gives
    \[
        T_{y_0}(Y^{\Gm})
        =\bigl((\mathfrak m/\mathfrak m^2)_0\bigr)^*
        =(T_{y_0}Y)_0
        =(T_{y_0}Y)^{\rho(\Gm)}.\qedhere
    \]
\end{proof}

\begin{proof}[Proof of Theorem~\ref{main}]
    Suppose that $(T^*X)^\aff\cong\Ominbar(\Lieg)$. The assumed isomorphism makes $\Gamma(T^*X,\OO)$ finitely generated. Lemma~\ref{cotangentgrading} shows that $T^*X$ is smooth and quasi-affine. By Lemma~\ref{smallboundary}, $T^*X$ is an open subvariety of $(T^*X)^\aff$. Hence
    \[
        \dim X=\frac12\dim\Ominbar(\Lieg)>0.
    \]

    Let $\Gm$ act on $T^*X$ by $t\cdot(x,\xi)=(x,t\xi)$. This action induces an algebraic $\Gm$-action on $(T^*X)^\aff$. Lemma~\ref{cotangentgrading} gives the grading
    \[
        \Gamma(T^*X,\OO)
        =\bigoplus_{k\ge0}H^0(X,\Sym^kT_X).
    \]
    Under the assumed isomorphism, this gives a $\Gm$-action on $\Ominbar(\Lieg)$. This action is nontrivial. Let $y_0$ be the vertex. It is the unique singular point, so it is fixed. Let $\rho$ be the tangent representation at $y_0$. By Lemma~\ref{weightsigns}, $\rho$ is nontrivial. The fiber grading has only nonnegative degrees. Hence pullback on the cotangent space at $y_0$, and therefore $\rho$, has only nonnegative weights. Lemma~\ref{tangentcone} gives $T_{y_0}\Ominbar(\Lieg)=\Lieg$. It also shows that $\rho(\Gm)$ preserves $\Ominbar(\Lieg)$.

    Lemma~\ref{fixedpoints} gives
    \[
        \left((T^*X)^\aff\right)^{\Gm}
        \cong\Spec\Gamma(T^*X,\OO)_0
        =\Spec\Gamma(X,\OO_X)
        =X^\aff.
    \]
    Let $x_0\in X^\aff$ correspond to $y_0$. Then Lemma~\ref{fixedpoints} gives
    \[
        \Lieg^{\rho(\Gm)}=T_{x_0}(X^\aff).
    \]
    Lemma~\ref{cotangentgrading} shows that $\Gamma(X,\OO_X)$ is finitely generated. By Lemma~\ref{smallboundary}, the canonical map identifies $X$ with an open subvariety of $X^\aff$.

    The variety $X^\aff$ is irreducible and contains $X$ as a dense open subvariety. Hence $\dim X^\aff=\dim X$. By Lemma~\ref{onesided},
    \[
        \dim X=\dim X^\aff
        \le\dim T_{x_0}(X^\aff)
        =\dim\Lieg^{\rho(\Gm)}\le\max_i|F_i|.
    \]
    Proposition~\ref{rootcounts} gives the upper bounds $2$, $7$, and $14$ in types $G_2$, $F_4$, and $E_8$. The table in Section~\ref{projectiveorbit} gives the orbit dimensions $6$, $16$, and $58$, so $\dim X=3$, $8$, and $29$, respectively. This is a contradiction.
\end{proof}

\begin{theorem}\label{smalltypes}
    For $n=2,3$, $\Ominbar(\mathfrak{sl}_n)$ is not isomorphic to $(T^*X)^\aff$ for any smooth quasi-affine variety $X$.
\end{theorem}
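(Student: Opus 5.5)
Run the same argument as in the proof of Theorem~\ref{main}, but handle $A_1$ separately, since Sections~\ref{projectiveorbit} and~\ref{fixedbound} were set up only for $A_2,G_2,F_4,E_8$.

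\textbf{Reduction.} Suppose $(T^*X)^\aff\cong\Ominbar(\mathfrak{sl}_n)$. Lemmas~\ref{cotangentgrading} and~\ref{smallboundary} show, as before, that $T^*X$ is open in $(T^*X)^\aff$ and $\Gamma(X,\OO_X)$ is finitely generated. Hence $\dim X=\tfrac12\dim\Ominbar=1$ for $n=2$, and $2$ for $n=3$. Fiber dilation gives a nontrivial $\Gm$-action on the cone with the vertex $y_0$ fixed. By Lemma~\ref{weightsigns} its tangent representation $\rho$ is nontrivial and has only nonnegative weights. Lemma~\ref{tangentcone} shows that $\rho(\Gm)$ preserves the cone inside $\Lieg$. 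Lemma~\ref{fixedpoints} gives $X^\aff\cong\Spec A_0$ with $T_{x_0}X^\aff=\Lieg^{\rho(\Gm)}$. Therefore
\[
\dim X\le\dim\Lieg^{\rho(\Gm)}.
\]

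\textbf{Type $A_2$.} Lemma~\ref{onesided} applies directly, and Proposition~\ref{rootcounts} gives $\dim\Lieg^{\rho(\Gm)}\le2=\dim X$. So this bound alone gives no contradiction, and this is the main obstacle. To get past it, I would look at the equality case. Equality forces $c=\langle\theta,\mu\rangle$, and the fixed space is $\Lieg_\theta\oplus\Lieg_{\alpha_i}$, the span of the root spaces in $F_i$ (say $i=1$, with $\mu=a\omega_1^\vee$). Then $X^\aff=\Spec A_0$ has a $2$-dimensional tangent space at $x_0$. Being $2$-dimensional, it is smooth at $x_0$. I would then compute $A_0$ concretely. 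The weights of $\rho$ on $\Lieg$ are
\[
c-\langle\alpha,\mu\rangle\in\{0,\,c,\,2c\}
\]
(with $a=c$), so the induced grading on $\C[\Ominbar]$ is explicit. The degree-zero invariants are then functions of the matrix entries $A_{13}$ and $A_{12}$ restricted to $\{\rk\le1\}$. These cut out $\C^2$, so $X^\aff\cong\mathbb A^2$.

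Next, use the grading in another way. The degree-$1$ part $H^0(X,T_X)$ is the weight-$c$ piece of $\C[\Ominbar]$ in the normalized grading, and it should be a module over $A_0=\C[x,y]$. In $\Ominbar$ this piece is spanned by functions of the entries $A_{11},A_{22},A_{23},A_{33},A_{21}$, modulo the rank-one relations. I would check that the resulting graded ring $\bigoplus_k H^0(X,\Sym^kT_X)$ cannot come from a cotangent bundle. Two features should rule it out. First, the degree-one part has the wrong rank: $T_X$ has rank $2$, while the generic rank of $H^0(X,T_X)$ over $A_0$ can be read off from the computation. Second, $\Sym^k$ of a rank-$2$ sheaf has rank $k+1$ over $A_0$, whereas the weight-$kc$ piece of $\C[\Ominbar]$ has a generic rank computable from the Hilbert function of the rank-one locus. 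Comparing these ranks, or comparing degree-$1$ generation, should give the contradiction. If this bookkeeping fails, a fallback is symplectic. The Poisson bracket on $\Gamma(T^*X)$ has degree $-1$, so it pairs $A_0$ with degree-$1$ functions. This makes $\{A_0,A_0\}=0$, so $X$ would be a Lagrangian section of the fibration $\Ominbar\to\mathbb A^2$. But the theorem does not assume compatibility with the Poisson structure on $\Ominbar$, so this fallback is weaker.

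\textbf{Type $A_1$.} Here $\Ominbar=\{a^2+bc=0\}\cong\C^2/\pm1$ and $\dim X=1$. The grading on $\C[\Ominbar]$ has nonnegative weights with $A_0=\C[X^\aff]$ of dimension $1$. By Lemma~\ref{smallboundary}, $X^\aff\setminus X$ has codimension at least $2$, so $X=X^\aff$ is a smooth affine curve. Then $T^*X$ is affine, and $(T^*X)^\aff=T^*X$ is smooth. This contradicts the fact that the vertex of $\Ominbar(\mathfrak{sl}_2)$ is singular. The same observation also disposes of $A_2$ if $X^\aff$ turns out smooth of dimension $2$ along $X$ with $X=X^\aff$ (again by the codimension-two property, once $X^\aff\cong\mathbb A^2$ is shown). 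Then $T^*X=T^*\mathbb A^2$ is smooth and affine, while $\Ominbar(\mathfrak{sl}_3)$ is singular at $0$. So in $A_2$ the real work is to prove $X^\aff$ is smooth. That follows from the tangent-space computation above, since $\dim T_{x_0}X^\aff\le2=\dim X^\aff$ gives smoothness at $x_0$. Smoothness elsewhere uses the contracting $\Gm$-action on $X^\aff$: every orbit closure contains $x_0$, so the singular locus, being closed and $\Gm$-stable, would contain $x_0$.
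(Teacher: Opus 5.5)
Your $A_1$ case is the paper's argument. In $A_2$ you correctly get $2=\dim X^\aff\le\dim T_{x_0}X^\aff=\dim\mathfrak{sl}_3^{\rho(\Gm)}\le 2$, so $x_0$ is a smooth point of $X^\aff$. However, the steps you use to turn this into a contradiction do not hold.

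First, the computation $A_0=\C[A_{12},A_{13}]$, $X^\aff\cong\mathbb A^2$, and the rank bookkeeping for $H^0(X,\Sym^kT_X)$ treat the grading on $\C[\Ominbar(\mathfrak{sl}_3)]$ as the one induced by the linear map $\rho$. But the transported fiber-dilation action is an arbitrary algebraic $\Gm$-action on the variety $\Ominbar(\mathfrak{sl}_3)$. Only its tangent representation at the vertex is linear (Lemma~\ref{tangentcone}), so $\rho$ does not determine the grading.

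Second, the ``contracting $\Gm$-action on $X^\aff$'' you use for smoothness away from $x_0$ is not available. Fiber dilation acts trivially on $X^\aff=\Spec A_0$. Scalar multiplication on the cone need not preserve the fixed-point subscheme, since it need not commute with the transported action.

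Third, codimension two does not force $X=X^\aff$ on a surface. The boundary $X^\aff\setminus X$ can be a nonempty finite set, as for $\mathbb A^2\setminus\{0\}\subset\mathbb A^2$.

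The missing idea is that smoothness at $x_0$ alone suffices, and this is the paper's route. Since $X^\aff\setminus X$ is finite, $X'=X\cup\{x_0\}$ is open in $X^\aff$, hence smooth and quasi-affine. The complement $T^*X'\setminus T^*X$ is the fiber over $x_0$, which has codimension $2$. Hartogs extension then gives $\Gamma(T^*X',\OO)=\Gamma(T^*X,\OO)$, so $(T^*X')^\aff\cong\Ominbar(\mathfrak{sl}_3)$. By Lemma~\ref{smallboundary}, the canonical map $T^*X'\to(T^*X')^\aff$ is an open immersion. On the zero section it is the inclusion $X'\subset X^\aff$, because positive-degree functions vanish there and $\Gamma(X',\OO)=\Gamma(X,\OO)$. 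So it sends the zero covector over $x_0$ to the vertex, which would make the vertex a smooth point, a contradiction.

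Your global-smoothness route can also be repaired, but for a different reason than the one you give. Every fixed point other than the vertex is a smooth point of $\Ominbar(\mathfrak{sl}_3)$, and a $\Gm$-fixed-point scheme is smooth at smooth points of the ambient variety (linearize the action on the completed local ring). Hence $X^\aff$ is smooth everywhere. Hartogs on the smooth affine variety $T^*(X^\aff)$ then gives $(T^*X)^\aff\cong T^*(X^\aff)$, which is smooth, a contradiction.
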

\begin{proof}
    Suppose that there is an isomorphism $\varphi:(T^*X)^\aff\xrightarrow{\sim}\Ominbar(\mathfrak{sl}_n)$. This makes $\Gamma(T^*X,\OO)$ finitely generated. Lemma~\ref{cotangentgrading} shows that $T^*X$ is smooth and quasi-affine. By Lemma~\ref{smallboundary}, $T^*X$ is an open subvariety of $(T^*X)^\aff$. Hence $$\dim X=(1/2)\dim \Ominbar(\mathfrak{sl}_n)=n-1.$$

    Lemma~\ref{cotangentgrading} identifies $\Gamma(X,\OO_X)$ with the degree-zero part of $\Gamma(T^*X,\OO)$ and shows that this ring is finitely generated. Lemma~\ref{smallboundary} identifies $X$ with an open subvariety of $X^\aff$ whose boundary has codimension at least two.

    Suppose first that $n=2$. Then $X^\aff$ is an irreducible curve, so the boundary is empty. Thus $X=X^\aff$ is affine. Hence $(T^*X)^\aff=T^*X$ is smooth, but $\Ominbar(\mathfrak{sl}_2)$ is singular.

    Suppose that $n=3$. Via $\varphi$, the action $t\cdot(x,\xi)=(x,t\xi)$ gives a $\Gm$-action on $\Ominbar(\mathfrak{sl}_3)$. Since $\dim X=2$ and $T^*X\subset(T^*X)^\aff$ is an open subvariety, this action is nontrivial. Let $y_0$ be the vertex. It is the unique singular point, so it is fixed. Let $\rho$ be the tangent representation at $y_0$. By Lemma~\ref{weightsigns}, $\rho$ is nontrivial. The fiber grading has only nonnegative degrees. Hence $\rho$ is one-sided. By Lemma~\ref{tangentcone}, we have $T_{y_0}\Ominbar(\mathfrak{sl}_3)=\mathfrak{sl}_3$, and $\rho(\Gm)$ preserves $\Ominbar(\mathfrak{sl}_3)$.

    By Lemma~\ref{fixedpoints}, the fixed-point scheme is isomorphic to $X^\aff$. Let $x_0\in X^\aff\subset(T^*X)^\aff$ be the point such that $\varphi(x_0)=y_0$. By the tangent-space assertion of the same lemma, we have the following equality of vector spaces:
    \[
        \mathfrak{sl}_3^{\rho(\Gm)}=T_{x_0}(X^\aff).
    \]
    The irreducible affine variety $X^\aff$ has dimension $2$. By Lemma~\ref{onesided} and Proposition~\ref{rootcounts}, we have the following inequalities:
    \[
        2=\dim X^\aff\le\dim T_{x_0}(X^\aff)
        =\dim\mathfrak{sl}_3^{\rho(\Gm)}
        \le\max_i|F_i|=2.
    \]
    Thus $\dim T_{x_0}(X^\aff)=\dim X^\aff=2$. Since $X^\aff$ is irreducible, $x_0$ is a smooth point of $X^\aff$.

    The boundary $X^\aff\setminus X$ has codimension at least two in the noetherian surface $X^\aff$. Hence it is zero-dimensional and finite. Set $X'=X\cup\{x_0\}$. The complement $X^\aff\setminus X'=(X^\aff\setminus X)\setminus\{x_0\}$ is finite and closed. Thus $X'$ is open in the affine variety $X^\aff$, so it is quasi-affine. It is smooth because $X$ is smooth and $x_0$ is a smooth point of $X^\aff$.

    Let $z_0\in T^*_{x_0}X'$ be the zero covector. The complement $T^*X'\setminus T^*X$ is contained in the rank-two fiber $T^*_{x_0}X'$. Hence it has codimension at least two in $T^*X'$. The smooth variety $T^*X'$ is normal, so Hartogs extension gives
    \[
        \Gamma(T^*X',\OO)=\Gamma(T^*X,\OO).
    \]
    Let $\beta:(T^*X')^\aff\xrightarrow{\sim}(T^*X)^\aff$ be the isomorphism induced by the inverse of the restriction map. Then $\varphi\circ\beta$ identifies $(T^*X')^\aff$ with $\Ominbar(\mathfrak{sl}_3)$.
    In particular, $\Gamma(T^*X',\OO)$ is finitely generated. Lemma~\ref{cotangentgrading} applies to $X'$ and shows that $T^*X'$ is smooth and quasi-affine. By Lemma~\ref{smallboundary}, the canonical map $T^*X'\to (T^*X')^\aff$ is an open immersion.

    Let $\alpha:T^*X'\to(T^*X')^\aff$ be this canonical open immersion. The complement $X'\setminus X$ has codimension at least two in the smooth surface $X'$. Hartogs extension gives $\Gamma(X',\OO_{X'})=\Gamma(X,\OO_X)$. Functions of positive fiber degree vanish on the zero section. Thus the restriction of $\beta\circ\alpha$ to the zero section is the inclusion $X'\subset X^\aff\subset(T^*X)^\aff$. Hence $\beta\circ\alpha$ sends $z_0$ to $x_0$, and $\varphi$ sends $x_0$ to $y_0$. Therefore $$\varphi\circ\beta\circ\alpha(z_0)=y_0$$  Since $\alpha$ is an open immersion and $\beta$ and $\varphi$ are isomorphisms, so $\varphi\circ\beta\circ\alpha$ should send smooth points to smooth points. But but $y_0$ is a singular point of $\Ominbar(\mathfrak{sl}_3)$, contradiction.
\end{proof}

\medskip
\footnotesize
\noindent Boming Jia, \textit{Email}: \texttt{jiabm@tsinghua.edu.cn}\\
\textsc{Yau Mathematical Sciences Center,\\
Jingzhai 301, Tsinghua University,\\
Beijing, 100084, China.}


{\small
\begin{thebibliography}{Jia21}
\setlength{\itemsep}{0pt}
\setlength{\parsep}{0pt}

\bibitem[Bou81]{Bourbaki}
N.~Bourbaki, \emph{Groupes et alg\`ebres de Lie, Chapitres 4--6}, Masson, Paris, 1981.

\bibitem[Dem77]{Demazure}
M.~Demazure, Automorphismes et d\'eformations des vari\'et\'es de Borel, \emph{Invent. Math.} \textbf{39} (1977), no.~2, 179--186, doi:10.1007/BF01390108.

\bibitem[FL25]{FuLiu}
B.~Fu and J.~Liu, The affine closure of cotangent bundles of horospherical spaces, arXiv:2502.06383v3 [math.AG] (2026).

\bibitem[Gro97]{Grosshans}
F.~D.~Grosshans, \emph{Algebraic homogeneous spaces and invariant theory}, Lecture Notes in Mathematics, vol.~1673, Springer-Verlag, Berlin, 1997, doi:10.1007/BFb0093525.

\bibitem[Jia21]{JiaSLU}
B.~Jia, The affine closure of $T^*(\SL_n/U)$, arXiv:2112.08649v1 [math.AG] (2021).

\bibitem[Sta]{Stacks}
The Stacks Project Authors, \emph{The Stacks Project},
\url{https://stacks.math.columbia.edu}.

\end{thebibliography}
}
\end{document}